\theoremstyle{plain}
\newtheorem{theorem}{Theorem}[section]
\newtheorem{lemma}[theorem]{Lemma}
\newtheorem{corollary}[theorem]{Corollary}
\newtheorem{proposition}[theorem]{Proposition}
\theoremstyle{definition}
\newtheorem{example}[theorem]{Example}
\theoremstyle{remark}
\newtheorem{remark}{Remark}
\def\R{\mathbb{R}}
\def\P{\mathbb{P}}
\def\E{\mathbb{E}}
\newcommand{\var}{\operatorname{Var}}		
\begin{document}
			
		\title{On the complete convergence for sequences of dependent random variables via stochastic domination conditions and regularly varying functions theory}
		
	\author{
		\name{Nguyen Chi Dzung\textsuperscript{a} and L\^{e} V\v{a}n Th\`{a}nh\textsuperscript{b}\thanks{CONTACT L\^{e} V\v{a}n Th\`{a}nh. Email: levt@vinhuni.edu.vn}}
		\affil{\textsuperscript{a}Institute of Mathematics, Vietnam Academy of Science and Technology, 18 Hoang Quoc Viet,
			Hanoi 10307, Vietnam\\
			\textsuperscript{b}Department of Mathematics, Vinh University, 182 Le Duan, Vinh, Nghe An, Vietnam}
	}
	
	\maketitle
		
		\begin{abstract}
		This note develops Rio's proof [C. R. Math. Acad. Sci. Paris, 1995] of the rate of convergence in the Marcinkiewicz--Zygmund
	strong law of large numbers to the case of sums of dependent random variables with regularly varying normalizing constants.
It allows us to obtain a complete convergence result for dependent sequences under uniformly bounded moment conditions. This result is new even when the underlying random variables are independent.
	The main theorems are applied to three different dependence structures: (i) $m$-pairwise 
	negatively dependent random variables, (ii) $m$-extended negatively dependent random variables, and (iii) $\varphi$-mixing sequences. 
	To our best knowledge, the results for cases (i) and (ii) are the first results
	in the literature on complete convergence for sequences of $m$-pairwise negatively dependent random variables 
	and $m$-extended negatively dependent random variables 
	under the optimal moment conditions even when $m=1$.
	While the results for cases (i) and (iii) unify and improve many existing ones, the result for case (ii) complements the main result of Chen et al. [J. Appl. Probab., 2010].
	Affirmative answers to open questions raised by Chen et al. [J. Math. Anal. Appl., 2014] and Wu and Rosalsky [Glas. Mat. Ser. III, 2015] are also given.
	An example illustrating the sharpness of the main result is presented.
	\end{abstract}
		
	\begin{keywords}
	Almost sure convergence; Complete convergence; Rate of convergence; Dependent random variables; Stochastic domination; Regularly varying function
\end{keywords}

\section{Introduction and motivations}\label{sec:Int}
	
	The maximal inequalities play
	a crucial role in the proofs of the strong law of large numbers (SLLN). 
	Let $\{X,X_n,n\ge1\}$ be a sequence of pairwise independent and identically distributed random variables.
	Etemadi 
\cite{etemadi1981elementary} is the first author who proved the Kolmogorov SLLN 
	\begin{equation*}\label{MZ.01}
		\lim_{n\to\infty}\dfrac{
			\sum_{i=1}^{n}(X_i-\E(X_i))}{n}=0\ \text{ almost surely (a.s.)}
	\end{equation*}
	under optimal moment condition $\E(|X|)<\infty$ without using the
	maximal inequalities. 
	For $1<p<2$, Mart\u{\i}kainen \cite{martikainen1995strong} proved
	that if $\mathbb{E}(|X|^p\log^{\beta}(|X|)) <\infty$
for some $\beta>\max\{0,4p-6\}$,  then the Marcinkiewicz--Zygmund SLLN
	holds, i.e., 
	\begin{equation}\label{MZ.05}
		\lim_{n\to\infty}\dfrac{
			\sum_{i=1}^{n}(X_i-\E(X_i))}{n^{1/p}}=0\ \text{ a.s. }
	\end{equation}
As far as we know, Rio \cite{rio1995vitesses}
	is the first author who proved \eqref{MZ.05} under the optimal moment condition
	$\mathbb{E}(|X|^p)<\infty$. Since then,
	many sub-optimal results on the Marcinkiewicz--Zygmund SLLN have been published. 
	In 2014, Sung \cite{sung2014marcinkiewicz} proposed a different method and
	proved
	\eqref{MZ.05} under a nearly optimal condition $\mathbb{E}(|X|^p(\log\log(|X|))^{2(p-1)} )<\infty$.
	Here and thereafter, $\log(x)$
denotes the natural logarithm (base $e$) of $\max\{x,e\}$, $x\ge0$. 
Very recently, da Silva \cite[Corollary 1]{da2020rates} 
	used the method proposed by Sung \cite{sung2014marcinkiewicz} to prove that if
	$\{X,X_n,n\ge1\}$ are pairwise negatively dependent and identically distributed random variables and $\mathbb{E}(|X|^p)<\infty$, then
	\begin{equation}\label{MZ.07}
		\lim_{n\to\infty}\dfrac{
			\sum_{i=1}^{n}(X_i-\E(X_i))}{n^{1/p}(\log\log(n))^{2(p-1)/p}}=0\ \text{ a.s. }
	\end{equation}
	A very special case of our main result will show that the optimal condition for \eqref{MZ.07} is 
	\begin{equation}\label{MZ.09}
		\mathbb{E}\left(|X|^p/(\log\log(|X|))^{2(p-1)}\right)<\infty.
	\end{equation}
Anh et al. \cite{anh2021marcinkiewicz} recently proved the Marcinkiewicz--Zygmund-type SLLN with the norming constants of the forms $n^{1/p}\tilde{L}(n^{1/p}),\ n\ge1$,
	where $\tilde{L}(\cdot)$ is the Bruijn conjugate of a slowly varying function $L(\cdot)$. However, 
	the proof in \cite{anh2021marcinkiewicz} is based on a maximal inequality for negatively associated random variables 
	which is no longer
	available even for pairwise independent random variables. 
	
	Although Rio's result was extended by the second named author in Th\`{a}nh  \cite{thanh2020theBaum},
	it only considered sums for pairwise independent identically distributed random variables there. 
	The motivation of the present note is that many other dependence structures do not enjoy the Kolmogorov maximal inequality
	such as pairwise negative dependence, extended negative dependence, among others.
	Unlike Th\`{a}nh  \cite{thanh2020theBaum}, we consider
	in this note the case where the underlying sequence of random variables is stochastically dominated by a random variable $X$.
	This allows us to derive the Baum--Katz-type theorem for sequences of dependent random variables satisfying a uniformly bounded moment condition as stated in the following results.
	To our best knowledge, Theorem \ref{thm.main0} and Corollary \ref{cor.main0} are new even when the underlying sequence is comprise 
	of independent random variables. We note that, in Theorem \ref{thm.main0} and Corollary \ref{cor.main0}, 
	no identical distribution condition or stochastic domination condition is assumed.
	
	\begin{theorem}\label{thm.main0}
		Let $1\le p<2$, and $\{X_n,n\ge1\}$
		be a sequence of random variables. Assume that there exists a universal constant $C$ such that
		\begin{equation}\label{eq:bound_var_00}
			\var\left(\sum_{i=k+1}^{k+\ell}f_{i}(X_{i})\right)\le C \sum_{i=k+1}^{k+\ell}\var(f_{i}(X_{i}))
		\end{equation}
		for all $k\ge0,\ell\ge 1$ and all nondecreasing functions $f_i$, $i\ge1$, 
		provided the variances exist.
		Let $L(\cdot)$ be a slowly varying function defined on $[0,\infty)$.
		When $p=1$, we assume further that $L(x)\ge 1$ and is increasing on $[0,\infty)$. 
		If 
		\begin{equation}\label{eq.stoch.domi.13}
			\sup_{n\ge1}\E\left(|X_n|^pL^p(|X_n|)\log(|X_n|)\log^2(\log(|X_n|))\right)<\infty,
		\end{equation}
		then for all $\alpha\ge 1/p$, we have
		\begin{equation}\label{eq.main0.15}
			\sum_{n= 1}^{\infty}
			n^{\alpha p-2}\mathbb{P}\left(\max_{1\le j\le n}\left|
			\sum_{i=1}^{j}(X_i-\E(X_i))\right|>\varepsilon  n^{\alpha}{\tilde{L}}(n^{\alpha})\right)<\infty \text{ for  all } \varepsilon >0,
		\end{equation}
	where $\tilde{L}(\cdot)$ is the Bruijn conjugate of $L(\cdot)$.
	\end{theorem}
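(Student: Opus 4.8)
The plan is to carry out Rio's scheme: reduce \eqref{eq.main0.15} to a dyadic-block statement using regular variation, truncate at the level set by the norming sequence, and split the resulting bound into a tail part, a centering part, and a main part, the last being handled by the maximal inequality attached to \eqref{eq:bound_var_00} together with Karamata-type estimates and the defining relation of the Bruijn conjugate.

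Set $b_n=n^{\alpha}\tilde L(n^{\alpha})$, which is regularly varying of positive index $\alpha\ge 1/p$, hence asymptotically increasing with $b_{2n}\asymp b_n$; since $\max_{i\le j}|\sum_{m\le i}(X_m-\E X_m)|$ is nondecreasing in $j$, \eqref{eq.main0.15} is equivalent to $\sum_k 2^{k(\alpha p-1)}\P(\max_{j\le 2^k}|\sum_{i\le j}(X_i-\E X_i)|>\varepsilon b_{2^k})<\infty$ for all $\varepsilon>0$. For the truncation, put $X_i^{(n)}=(-b_n)\vee(X_i\wedge b_n)$, a nondecreasing function of $X_i$, so that \eqref{eq:bound_var_00} applies to $T_j^{(n)}=\sum_{i\le j}(X_i^{(n)}-\E X_i^{(n)})$. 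On $\{|X_i|\le b_n\ \text{for all}\ i\le n\}$ one has $\sum_{i\le j}(X_i-\E X_i)=T_j^{(n)}+R_j^{(n)}$ with $R_j^{(n)}=\sum_{i\le j}(\E X_i^{(n)}-\E X_i)$, whence it suffices to bound $\sum_n n^{\alpha p-2}\sum_{i\le n}\P(|X_i|>b_n)$, to check $\sup_{j\le n}|R_j^{(n)}|=o(b_n)$, and to bound $\sum_n n^{\alpha p-2}\P(\max_{j\le n}|T_j^{(n)}|>\varepsilon b_n/2)$.

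From \eqref{eq.stoch.domi.13} and the eventual monotonicity of $x\mapsto x^{p}L^{p}(x)\log(x)\log^{2}(\log(x))$ (for $p=1$ this is exactly why we require $L\ge 1$ increasing), one gets, uniformly in $i$ and for large $t$, the bounds $\P(|X_i|>t)\le M t^{-p}L^{-p}(t)(\log t)^{-1}(\log\log t)^{-2}$ and $\E[|X_i|\mathbf{1}(|X_i|>t)]\le M t^{1-p}L^{-p}(t)(\log t)^{-1}(\log\log t)^{-2}$, with $M$ the supremum in \eqref{eq.stoch.domi.13}. The Bruijn-conjugate relation $\tilde L(x)L(x\tilde L(x))\to 1$ gives $b_n^{p}L^{p}(b_n)\sim n^{\alpha p}$, and $\log b_n\asymp\log n$, $\log^{2}\log b_n\asymp\log^{2}\log n$. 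Hence the tail part is $\lesssim\sum_n n^{\alpha p-1}\cdot M b_n^{-p}L^{-p}(b_n)(\log b_n)^{-1}(\log\log b_n)^{-2}\asymp\sum_n 1/(n\log n(\log\log n)^{2})<\infty$; the centering correction obeys $\sup_{j\le n}|R_j^{(n)}|\le nM b_n^{1-p}L^{-p}(b_n)(\log b_n)^{-1}(\log\log b_n)^{-2}=o(b_n)$; and, by Karamata's theorem (the relevant integrand being regularly varying of index $1-p\in(-1,0]$), $\sum_{i\le n}\var(X_i^{(n)})\le\sum_{i\le n}\int_0^{b_n}2t\,\P(|X_i|>t)\,dt\le C n b_n^{2-p}L^{-p}(b_n)(\log b_n)^{-1}(\log\log b_n)^{-2}$ for $n$ large. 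With a maximal inequality for $T_j^{(n)}$ of the shape $\P(\max_{j\le n}|T_j^{(n)}|>\varepsilon b_n/2)\le C_\varepsilon b_n^{-2}\sum_{i\le n}\var(X_i^{(n)})$, the main part is $\lesssim\sum_n n^{\alpha p-1}b_n^{-p}L^{-p}(b_n)(\log b_n)^{-1}(\log\log b_n)^{-2}\asymp\sum_n 1/(n\log n(\log\log n)^{2})<\infty$. Thus the scheme closes, and one sees that the factor $\log(|X_n|)\log^{2}(\log(|X_n|))$ in \eqref{eq.stoch.domi.13} is precisely what makes both the tail series and the main series converge.

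The hard part is producing, from \eqref{eq:bound_var_00} alone, a maximal inequality sharp enough for the last step. The block-variance condition only yields directly a Rademacher--Menshov/M\'oricz bound $\E\max_{j\le n}|T_j^{(n)}|^{2}\le C(\log n)^{2}\sum_{i\le n}\var(X_i^{(n)})$, and the superfluous $(\log n)^{2}$ would force two extra logarithms in \eqref{eq.stoch.domi.13}. Following Rio, one circumvents this by decomposing $X_i^{(n)}$ into dyadic ``shells'' $W_i^{(l)}=\big((-2^{l})\vee(X_i\wedge 2^{l})\big)-\big((-2^{l-1})\vee(X_i\wedge 2^{l-1})\big)$, each still a nondecreasing function of $X_i$ so that \eqref{eq:bound_var_00} applies, controlling $\max_{j\le n}|\sum_{i\le j}(W_i^{(l)}-\E W_i^{(l)})|$ shell by shell via the fact that $\sum_{i\le n}\var(W_i^{(l)})\lesssim n2^{l(2-p)}L^{-p}(2^{l})l^{-1}(\log l)^{-2}$ decays quickly over the relevant range $1\le l\lesssim\log_2 b_n$, and allocating the threshold $\varepsilon b_n/2$ among scales as $t_{n,l}$ with $\sum_l t_{n,l}\le\varepsilon b_n/2$ so that the double sum over $l$ and over $n$ converges. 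The delicate core is the bookkeeping in this last decomposition --- reconciling the shell centerings with the global correction $R_j^{(n)}$, handling the boundary scale $l\sim\log_2 b_n$ through the tail estimate, and choosing the $t_{n,l}$ so as not to reintroduce the spurious logarithms; everything else reduces to routine uses of Karamata's theorem and the Bruijn-conjugate asymptotics.
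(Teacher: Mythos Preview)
Your overall plan --- reduce to dyadic blocks, truncate, split into tail/centering/main, and use Karamata plus the Bruijn relation $b_n^pL^p(b_n)\sim n^{\alpha p}$ --- is the right outline, and the paper does exactly this (by first passing through stochastic domination: the moment bound \eqref{eq.stoch.domi.13} yields a dominating $X$ with $\E(X^pL^p(X))<\infty$, and then Theorem~\ref{thm.main1} is invoked). Your uniform Markov bound $\P(|X_i|>t)\le M/(t^pL^p(t)\log t\,(\log\log t)^2)$ is precisely the content of that reduction, so up to the ``hard part'' the two approaches coincide.

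The gap is in the hard part. Your value-axis shells $W_i^{(l)}$ do \emph{not} circumvent the maximal-inequality problem. First, the shell variances do not ``decay quickly'': since $2-p>0$, the bound $\sum_{i\le n}\var(W_i^{(l)})\lesssim n\,2^{l(2-p)}L^{-p}(2^l)l^{-1}(\log l)^{-2}$ is \emph{increasing} in $l$, and the top shell $l\sim\log_2 b_n$ alone already carries variance $\asymp n\,b_n^{2-p}L^{-p}(b_n)(\log b_n)^{-1}(\log\log b_n)^{-2}$, the same as the full truncated sum. So no allocation $t_{n,l}$ can do better than treating $T_j^{(n)}$ whole. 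Second, for each shell you still face $\max_{j\le n}|\sum_{i\le j}(W_i^{(l)}-\E W_i^{(l)})|$, and \eqref{eq:bound_var_00} gives only the variance of the full sum over a block, not of the running maximum; M\'oricz is all you have, and the $(\log n)^2$ returns.

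Rio's actual device (used in the paper's Theorem~\ref{thm.main1}) is an \emph{index}-dyadic decomposition, not a value-shell one: for $j<2^n$ one writes $j$ in binary via $j_m=\lfloor j/2^m\rfloor 2^m$ and telescopes, yielding an inequality of the form
\[
\max_{1\le j<2^n}|S_{j,n}|\le \sum_{m=1}^n \max_{0\le k<2^{n-m}}\Bigl|\sum_{i=k2^m+1}^{k2^m+2^{m-1}}\bigl(X_{i,2^{m-1}}-\E X_{i,2^{m-1}}\bigr)\Bigr|
+\sum_{m=1}^n \max_{0\le k<2^{n-m}}\Bigl|\sum_{i=k2^m+1}^{(k+1)2^m}Y_{i,m}\Bigr|+\text{(deterministic)},
\]
where $Y_{i,m}=X_{i,2^m}-X_{i,2^{m-1}}-\E(\cdot)$ with truncation level $b_{2^m}$ tied to the block size $2^m$. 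The point is that each inner $\max_k$ is a maximum over $2^{n-m}$ \emph{disjoint} block sums, so a union bound plus Chebyshev plus \eqref{eq:bound_var_00} gives $\P(\max_k|\cdot|>t)\le 2^{n-m}\cdot C t^{-2}\cdot(\text{block variance})$ with no M\'oricz factor. Matching the truncation scale to the block length is exactly what makes the double sum in $m$ and $n$ converge. Replace your value-shells by this index decomposition (or, equivalently, first pass to stochastic domination and quote Theorem~\ref{thm.main1}) and the argument closes.
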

	Considering a special interesting case $\alpha=1/p$ and $L(x)\equiv 1$, we obtain the following corollary.
	\begin{corollary}\label{cor.main0}
		Let $1\le p<2$, and $\{X_n,n\ge1\}$
		be a sequence of random variables satisfying condition
		\eqref{eq:bound_var_00}.
		If 
		\begin{equation}\label{eq.stoch.domi.cor.13}
			\sup_{n\ge1}\E\left(|X_n|^p\log(|X_n|)\log^2(\log(|X_n|))\right)<\infty,
		\end{equation}
		then
		\begin{equation}\label{eq.cor.main0.15}
			\sum_{n= 1}^{\infty}
			n^{-1}\mathbb{P}\left(\max_{1\le j\le n}\left|
			\sum_{i=1}^{j}(X_i-\E(X_i))\right|>\varepsilon  n^{1/p}\right)<\infty \text{ for  all } \varepsilon >0.
		\end{equation}
	\end{corollary}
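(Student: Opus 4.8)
\textbf{Proof proposal for Corollary \ref{cor.main0}.}
The plan is to obtain the corollary as the special case of Theorem \ref{thm.main0} corresponding to the constant slowly varying function and the critical exponent. First I would apply Theorem \ref{thm.main0} with $L(x)\equiv 1$ on $[0,\infty)$ and with $\alpha=1/p$. The function $L\equiv 1$ is trivially slowly varying; moreover, in the borderline case $p=1$ it satisfies $L(x)\ge 1$ and is (weakly) increasing on $[0,\infty)$, so the extra hypothesis that Theorem \ref{thm.main0} imposes when $p=1$ is fulfilled. Since $\{X_n,n\ge1\}$ is assumed to satisfy \eqref{eq:bound_var_00} in the corollary, all the hypotheses of Theorem \ref{thm.main0} will be in force once the remaining moment condition is matched.

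Next I would identify the Bruijn conjugate of $L\equiv 1$. Recall that $\tilde L$ is characterized, up to asymptotic equivalence, by $L(x)\tilde L\big(xL(x)\big)\to 1$ as $x\to\infty$; with $L\equiv 1$ this reduces to $\tilde L(x)\to 1$, so one may take $\tilde L\equiv 1$. Consequently $\tilde L(n^{\alpha})=1$ for every $n\ge1$.

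With these choices the hypothesis \eqref{eq.stoch.domi.13} of Theorem \ref{thm.main0} reads $\sup_{n\ge1}\E\big(|X_n|^p\log(|X_n|)\log^2(\log(|X_n|))\big)<\infty$ because $L^p(|X_n|)=1$, which is precisely \eqref{eq.stoch.domi.cor.13}. On the conclusion side, $\alpha p-2=1-2=-1$ and $n^{\alpha}\tilde L(n^{\alpha})=n^{1/p}$, so \eqref{eq.main0.15} becomes exactly \eqref{eq.cor.main0.15}. This completes the deduction.

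I do not anticipate any genuine obstacle here: the only point requiring a moment's care is the routine verification that $L\equiv 1$ is an admissible slowly varying function in Theorem \ref{thm.main0} (including the monotonicity requirement in the case $p=1$) and that its Bruijn conjugate is again the constant function $1$; everything else is a direct substitution into the statement of Theorem \ref{thm.main0}.
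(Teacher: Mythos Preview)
Your proposal is correct and matches the paper's approach exactly: the paper introduces Corollary~\ref{cor.main0} as the special case $\alpha=1/p$, $L(x)\equiv 1$ of Theorem~\ref{thm.main0}, with $\tilde L\equiv 1$. There is nothing to add.
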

	
	\begin{remark}\label{rem.01}
		\begin{itemize}
			\item [(i)] 
			Since $\{\max_{1\le j\le n}|\sum_{i=1}^j (X_i-\E(X_i))|,n\ge1\}$ is nondecreasing, it follows from \eqref{eq.cor.main0.15} that 
			(see, e.g., Remark 1 in Dedecker and Merlev{\`e}de \cite{dedecker2008convergence})
SLLN \eqref{MZ.05} holds.
			\item[(ii)] For SLLN under the uniformly bounded moment condition, Baxter et al  \cite{baxter2004slln} proved \eqref{MZ.05} with assumptions that the sequence $\{X_n,n\ge1\}$ is independent and $
				\sup_{n\ge1}\E\left(|X_n|^r\right)<\infty \text{ for some } r>p.$ This condition is much stronger than \eqref{eq.stoch.domi.cor.13}.
	Baxter et al. \cite{baxter2004slln} studied the SLLN for weighted sums which is more general than 
	\eqref{MZ.05} but their method does not give the rate of convergence like Corollary \ref{cor.main0}.
			\item[(iii)] For sequence of pairwise independent identically distributed random variables $\{X,X_n,n\ge1\}$, Chen et al. \cite{chen2014bahr}
			obtained \eqref{eq.cor.main0.15} under condition that
			$\mathbb{E}(|X|^p(\log(|X|))^{r})<\infty$  for some $1<p<r<2$. We see that with identical distribution assumption, this moment condition is
still stronger than \eqref{eq.stoch.domi.cor.13}.
			\item[(iv)] Conditions \eqref{eq.stoch.domi.13} and \eqref{eq.stoch.domi.cor.13} are very sharp and almost optimal.
			Even with assumption that the underlying random variables are independent, a special case of
			Example \ref{ex.03} in Section \ref{sec:domination} shows that \eqref{eq.cor.main0.15} may fail
			if  \eqref{eq.stoch.domi.cor.13} is weakened to
			\begin{equation*}\label{eq.stoch.domi.15}
				\sup_{n\ge1}\E\left(|X_n|^p\log(|X_n|)\log(\log(|X_n|))\right)<\infty.
			\end{equation*}
		\end{itemize}
	\end{remark}

The rest of the paper is arranged as follows. Section \ref{sec:complete} presents a complete convergence result for sequences
of dependent random variables with regularly varying normalizing constants.
The proof of Theorem \ref{thm.main0} and an example illustrating the sharpness of the
result are presented in Section \ref{sec:domination}. Finally, Section \ref{sec:appl} contains corollaries and remarks
comparing our results and the ones in the literature.

	\section{Complete convergence for sequences of dependent random variables with regularly varying normalizing constants}\label{sec:complete}
	
	In this section, we will use the method in Rio \cite{rio1995vitesses} to obtain 
	complete convergence for sums
	of dependent random variables with regularly varying constants under stochastic domination condition. 
	The proof is similar to that of Theorem 1 in Th\`{a}nh \cite{thanh2020theBaum}.
	
	A family of random variables $\{X_i,i\in  I\}$ is said to be stochastically dominated by a
	random variable $X$ if
	\begin{equation}\label{eq.stoch.dominated}
		\sup_{i\in I}\P(|X_i|>t)\le \P(|X|>t), \ \text{ for all } t\ge0.
	\end{equation}
	We note that many authors use an apparently weaker definition of $\{X_i,i\in  I\}$ being stochastically dominated
	by a random variable $X$, namely that
	\begin{equation}\label{eq.stoch.domi.06}
		\sup_{i\in I}\P(|X_i|>t)\le C\P(|X|>t), \text{ for all } t\ge0
	\end{equation}	
	for some constant $C\in (0,\infty)$. However, it is shown by Rosalsky and Th\`{a}nh  \cite{rosalsky2021note}
	that \eqref{eq.stoch.dominated} and \eqref{eq.stoch.domi.06} are indeed equivalent.

Let $\rho\in\R$. A real-valued function $R(\cdot )$ is said to be \textit{regularly varying} (at infinity) with index of regular variation
$\rho$ if it is 
a positive and measurable function on $[A,\infty)$ for some $A> 0$, and for each $\lambda>0$,
\begin{equation*}\label{rv01}
	\lim_{x\to\infty}\dfrac{R(\lambda x)}{R(x)}=\lambda^\rho.
\end{equation*}
A regularly varying function with the index of regular variation $\rho=0$ is called \textit{slowly varying} (at infinity).
	If $L(\cdot)$ is a slowly varying function, then by Theorem 1.5.13 in Bingham et al. \cite{bingham1989regular},
	there exists a slowly varying function $\tilde{L}(\cdot)$, unique up to asymptotic equivalence, satisfying
	\begin{equation}\label{BGT1513}
		\lim_{x\to\infty}L(x)\tilde{L}\left(xL(x)\right)=1\ \text{ and } \lim_{x\to\infty}\tilde{L}(x)L\left(x\tilde{L}(x)\right)=1.
	\end{equation}
	The function $\tilde{L}$ is called the de Bruijn conjugate of $L$, and $\left(L,\tilde{L}\right)$ is called a (slowly varying) conjugate pair (see, e.g., 
p. 29 in Bingham et al. \cite{bingham1989regular}).
	If $L(x)=\log^\gamma(x)$ or $L(x)=\log^\gamma\left(\log(x)\right)$ for some $\gamma\in\R$, then $\tilde{L}(x)=1/L(x)$. Especially, if $L(x)\equiv 1$,
	then $\tilde{L}(x)\equiv1$.
	
	Here and thereafter, for a slowly varying function $L(\cdot)$,
	we denote the de Bruijn
	conjugate of $L(\cdot)$
	by $\tilde{L}(\cdot)$. 
	Throughout, we will assume that
	$L(x)$ and $\tilde{L}(x)$ are both continuous on $[0,\infty)$ and
	differentiable on $[A,\infty)$ for some $A>0$. We also assume that (see Lemma 2.2 in Anh et al. \cite{anh2021marcinkiewicz})
	\begin{equation}\label{eq.galambos}
		\lim_{x\to\infty}\dfrac{xL'(x)}{L(x)}=0.
	\end{equation}

	\begin{theorem}\label{thm.main1}
		Let $1\le p<2$, and $\{X_n,n\ge1\}$
		be a sequence of 
		random variables satisfying condition
		\eqref{eq:bound_var_00}.
		Let $L(\cdot)$ be as in Theorem \ref{thm.main0}.
		If $\{X_n, \, n \geq 1\}$ is stochastically dominated by a random variable $X$, and
		\begin{equation}\label{eq.main.13}
			\E\left(|X|^p L^p(|X|)\right)<\infty,
		\end{equation}
		then for all $\alpha\ge 1/p$, we have
		\begin{equation}\label{eq.main.15}
			\sum_{n= 1}^{\infty}
			n^{\alpha p-2}\mathbb{P}\left(\max_{1\le j\le n}\left|
			\sum_{i=1}^{j}(X_i-\E(X_i))\right|>\varepsilon  n^{\alpha}{\tilde{L}}(n^{\alpha})\right)<\infty \text{ for  all } \varepsilon >0.
		\end{equation}
	\end{theorem}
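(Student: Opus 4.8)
The plan is to adapt Rio's truncation scheme exactly as in Th\`{a}nh \cite{thanh2020theBaum}, but keeping track of the stochastic domination condition instead of an identical-distribution hypothesis, and replacing the use of the Kolmogorov maximal inequality by the weaker moment bound \eqref{eq:bound_var_00}. Fix $\alpha\ge 1/p$ and $\varepsilon>0$. For each $n\ge1$ I would set a truncation level $b_n\asymp n^\alpha\tilde{L}(n^\alpha)$ and, for each summand, define $X_i' = -b_n\wedge (X_i\vee b_n)$ (the truncation at level $b_n$), together with the tail part $X_i''=X_i-X_i'$. Writing $S_j=\sum_{i=1}^j(X_i-\E X_i)$, I would split the event in \eqref{eq.main.15} into the contribution of the truncated variables and that of the tails, using $\max_j|S_j|\le \max_j|\sum_{i\le j}(X_i'-\E X_i')| + \sum_{i\le n}|X_i''| + \sum_{i\le n}\E|X_i''|$ (roughly; some care with centering at the truncated mean is needed, exactly as in \cite{rio1995vitesses,thanh2020theBaum}).

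The three resulting series are handled as follows. \emph{(a) The tail series.} By stochastic domination, $\P(X_i''\ne 0)\le\P(|X|>b_n)$, so $\sum_n n^{\alpha p-1}\P(|X|>b_n)<\infty$ reduces, via a regular-variation change of variables (Potter bounds and the Karamata-type estimates built from \eqref{eq.galambos} and \eqref{BGT1513}), to $\E(|X|^pL^p(|X|))<\infty$, which is exactly \eqref{eq.main.13}. \emph{(b) The centering/expectation series.} One must show $\sum_n n^{\alpha p-2}\cdot n\cdot b_n^{-1}\sup_i\E|X_i''|<\infty$; again by domination $\sup_i\E|X_i''|\le\E(|X|\mathbf 1(|X|>b_n))$, and the same regular-variation bookkeeping (now using $\alpha p\ge 1$, with the borderline $\alpha p=1$, $p=1$ case being why $L$ must be increasing and $\ge1$) yields finiteness under \eqref{eq.main.13}. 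When $\alpha p>1$ one also has to control the part of $\E X_i'$ coming from mid-range values, handled by the complementary Karamata estimate. \emph{(c) The truncated series.} Here I would use Markov's inequality at the second-moment level together with \eqref{eq:bound_var_00}: applying it to the nondecreasing truncation functions $f_i(x)=-b_n\wedge(x\vee b_n)$ gives $\var(\sum_{i\le n}X_i')\le C\sum_{i\le n}\var(X_i')\le Cn\,\E\big((X')^2\big)\le Cn\,\E\big(X^2\wedge b_n^2\big)$. A maximal version follows since the variance bound \eqref{eq:bound_var_00} is stated for all intervals $[k+1,k+\ell]$ and hence (by a standard Etemadi-style or L\'evy-type blocking argument, or directly by a Rademacher--Menshov-type dyadic decomposition using \eqref{eq:bound_var_00} on dyadic blocks) upgrades to a bound on $\E\max_{j\le n}|\sum_{i\le j}(X_i'-\E X_i')|^2$ of order $n\log^2 n\cdot\E(X^2\wedge b_n^2)$; the two logarithmic factors are exactly what the condition \eqref{eq.stoch.domi.13} in Theorem \ref{thm.main0} is designed to absorb, but here, under the stronger moment condition \eqref{eq.main.13}, an extra $\log$ in the normalization is available so one can afford the loss. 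Then $\sum_n n^{\alpha p-2}\cdot b_n^{-2}\cdot n\log^2 n\cdot\E(X^2\wedge b_n^2)<\infty$ reduces once more, by splitting $\E(X^2\wedge b_n^2)=\E(X^2\mathbf 1(|X|\le b_n))+b_n^2\P(|X|>b_n)$ and applying the Karamata estimates, to \eqref{eq.main.13}.

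The main obstacle I expect is \emph{obtaining the maximal inequality for the truncated sums from the single-block variance bound \eqref{eq:bound_var_00}}: unlike \cite{thanh2020theBaum}, where the Kolmogorov maximal inequality is available outright, here one only has control of $\var(\sum_{i=k+1}^{k+\ell}f_i(X_i))$ for a fixed block, so passing to $\max_{j\le n}$ costs a $\log^2 n$ factor via a dyadic (Rademacher--Menshov) argument — and one must verify that the relevant truncation-and-centering functions are genuinely nondecreasing in each $X_i$ so that \eqref{eq:bound_var_00} applies. The second delicate point, purely technical, is the regular-variation bookkeeping for the borderline index $\alpha p=1$ (equivalently $p=1$, $\alpha=1$), which is why Theorem \ref{thm.main0} imposes that $L$ be increasing with $L\ge1$; the estimates of Bingham et al. \cite{bingham1989regular} (Theorems 1.5.11--1.5.13) together with \eqref{eq.galambos} and the defining relations \eqref{BGT1513} are what make all three series collapse to the single moment hypothesis \eqref{eq.main.13}.
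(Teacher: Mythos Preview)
Your step (c) contains a genuine gap. The Rademacher--Menshov dyadic argument applied to a \emph{single} truncation level $b_n$ indeed costs a factor $\log^2 n$, but this loss is \emph{not} affordable under the optimal moment condition \eqref{eq.main.13}. Concretely, take $\alpha=1/p$, $L\equiv 1$, and $\P(|X|>t)\sim t^{-p}/\log^2 t$; then $\E|X|^p<\infty$, but a direct computation gives $\E(X^2\wedge b_n^2)\asymp n^{(2-p)/p}/\log^2 n$, so your series $\sum_n n^{-1}\cdot n^{-2/p}\cdot n\log^2 n\cdot \E(X^2\wedge b_n^2)\asymp \sum_n n^{-1}=\infty$. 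You have also inverted the relationship between Theorems \ref{thm.main0} and \ref{thm.main1}: it is Theorem \ref{thm.main0} that carries the extra $\log\cdot\log^2\log$ factor precisely to pay for a crude domination argument (Proposition \ref{prop.sufficiency.for.stochastic.domination.2}); Theorem \ref{thm.main1} has the \emph{weaker} (optimal) hypothesis and therefore cannot absorb a $\log^2 n$ penalty.

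What the paper (following Rio) does instead is a \emph{multi-scale} truncation: one passes to dyadic $n=2^N$ and writes $\max_{j<2^N}|S_{j,N}|$ as a sum over scales $m=1,\dots,N$ via the decomposition \eqref{prop-18}, where at scale $m$ the blocks have length $2^m$ and the summands are truncated at level $b_{2^{m-1}}$ (together with the telescoping differences $Y_{i,m}=X_{i,2^m}-X_{i,2^{m-1}}-\E(\cdot)$). Because the truncation height is matched to the block length at every scale, the variance bound \eqref{eq:bound_var_00} applied blockwise yields a series that collapses to $\E(|X|^pL^p(|X|))$ with no logarithmic loss. To make \eqref{eq:bound_var_00} applicable to the differences $Y_{i,m}$ one needs $x\mapsto (x\wedge b_{2^m})-(x\wedge b_{2^{m-1}})$ to be nondecreasing, which is why the paper first reduces to nonnegative $X_n$ by splitting into positive and negative parts; this is the step your sketch misses.
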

	
We only sketch the proof of Theorem \ref{thm.main1}
and refer the reader to the proof of Theorem 1 in Th\`{a}nh  \cite{thanh2020theBaum} for details.
The main difference here is that we have to consider the nonnegative random
variables so that after applying certain truncation techniques (see \eqref{prop3.4} and \eqref{prop3.3} below), 
the new random variables still satisfy condition \eqref{eq:bound_var_00}.

\begin{proof}[Sketch proof of Theorem \ref{thm.main1}]
Since $\{X_{n}^{+},n\ge1\}$ and $\{X_{n}^{-},n\ge1\}$ satisfy the assumptions of the theorem and
		$X_{n}=X_{n}^{+}-X_{n}^{-},n\ge1$, without loss of generality we can assume that $X_n\ge 0$ for all $n\ge1$.
		For $n\ge1$, set 
		\[b_n=\begin{cases}
			n^{\alpha}\tilde{L}\left(A^{\alpha}\right)& \text{  if } 0\le  n<A,\\
			n^{\alpha}\tilde{L}\left(n^{\alpha}\right)& \text{  if }  n\ge  A,\\
		\end{cases}\]
		\begin{equation}\label{prop3.4}
			X_{i,n}=X_i\mathbf{1}(X_i\le b_n)+b_n\mathbf{1}(X_i> b_n),\ 1\le i\le n,
		\end{equation}
		and
		\begin{equation}\label{prop3.3}
			Y_ {i, m} =\left(X_ {i, 2^m}-X_{i, 2^{m-1}}\right)-\E\left(X_{i, 2^m}-X_{i, 2^{m-1}}\right),\ m\ge1,\ i\ge1.
		\end{equation}
		It is easy to see that $b_n$ is strictly increasing and \eqref{eq.main.15} is equivalent to 
		\begin{equation}\label{prop3.5}
			\sum_{n= 1}^{\infty}
			2^{n(\alpha p-1)}\mathbb{P}\left(\max_{1\le j< 2^n}\left|
			\sum_{i=1}^{j}(X_i-\mathbb{E}(X_i)) \right|>\varepsilon  b_{2^{n}}\right)<\infty \text{ for  all } \varepsilon >0.
		\end{equation}
	It follows from stochastic domination condition
	and definition of $b_{n}$ that
	\begin{equation}\label{prop-15}
		\begin{split}
			0\le \E\left(X_ {i, 2^m}-X_ {i, 2^{m-1}}\right)&\le \E\left(|X|\mathbf{1}(|X|>b_{2^{m-1}})\right).
		\end{split}
	\end{equation}
		Using \eqref{prop-15} and the same argument as in Th\`{a}nh  \cite[Equation (23)]{thanh2020theBaum}, the proof of \eqref{prop3.5} will be completed if we can show that
		\begin{equation}\label{prop-12}
			\sum_{n=1}^{\infty} 2^{n(\alpha p-1)} \mathbb{P}\left(\max_{1\le j< 2^n}\left|\sum_{i=1}^j  
			(X_{i,2^n}-\E (X_{i,2^n}))\right|\ge \varepsilon b_{2^{n-1}} \right)<\infty \text{ for all }\varepsilon>0.
		\end{equation}
		For $m\ge 0,$ set $S_{0,m}=0$ and
		\[S_{j,m}=\sum_{i=1}^j (X_{i,2^m}-\E \left(X_{i,2^m}\right)),\ j\ge 1.\]
		For $1\le j<2^n$ and for $0\le m\le n$, let $k_{j,m}=\lfloor j/2^m\rfloor $ be the greatest integer which is
		less than or equal to $j/2^m$, $j_m = k_{j,m} 2 ^m$. Then (see Th\`{a}nh \cite[Equation (28)]{thanh2020theBaum})
		\begin{equation}\label{prop-18}
			\begin{split}
				\max_{1\le j< 2^n}\left|S_{j,n}\right|
				&\le \sum_{m=1}^n \max_{0\le k<2^{n-m}}\left|\sum_{i=k2^m+1}^{k2^m+2^{m-1}}\left(X_{i, 2^{m-1}}-\E(X_{i, 2^{m-1}})\right)\right| \\
				&\quad +\sum_{m=1}^n \max_{0\le k<2^{n-m}}\left|\sum_{i=k2^m+1}^{(k+1)2^m} Y_{i,m}\right|+\sum_{m=1}^n 2^{m+1}
				\E \left(|X|\mathbf{1}(|X|>b_{2^{m-1}})\right).
			\end{split}
		\end{equation}
Combining \eqref{eq:bound_var_00},  \eqref{prop3.4} and \eqref{prop3.3}, we have for all $m\ge 1$,
		\begin{equation}\label{eq:bound_var1}
			\E\left(\sum_{i=k+1}^{k+\ell} X_{i,2^{m-1}}-\E(X_{i,2^{m-1}})\right)^2\le C\sum_{i=k+1}^{k+\ell} \E(X_{i,2^{m-1}}^2),\ k\ge 0, \ell\ge 1.
		\end{equation}
		and
		\begin{equation}\label{eq:bound_var2}
			\E\left(\sum_{i=k+1}^{k+\ell} Y_{i,m}\right)^2\le C\sum_{i=k+1}^{k+\ell} \E(Y_{i,m}^2),\ k\ge 0, \ell\ge 1.
		\end{equation}
By using \eqref{prop-18}--\eqref{eq:bound_var2}, and the argument as in pages 1236-1238 in Th\`{a}nh  \cite{thanh2020theBaum}, we obtain
		\eqref{prop-12}.
		
	\end{proof}
	
	The next proposition shows that the moment condition in \eqref{eq.main.13} in Theorem \ref{thm.main1} is optimal.
	The proof is the same as that of the implication (iv)$\Rightarrow$(i) of Theorem 3.1 in Anh et al. \cite{anh2021marcinkiewicz}.
	We omit the details.
	
	\begin{proposition}\label{thm.main2}
		Let $1\le p<2$, and let
		$\{X_n, \, n \geq 1\}$ be a sequence of identically distributed
		random variables satisfying \eqref{eq:bound_var_00}, $L(\cdot)$ as in Theorem \ref{thm.main0}. If for some constant $c$, 
		\begin{equation}\label{eq.main.15a}
			\sum_{n= 1}^{\infty}
			n^{-1}\mathbb{P}\left(\max_{1\le j\le n}\left|
			\sum_{i=1}^{j}(X_i-c)\right|>\varepsilon  n^{1/p}{\tilde{L}}(n^{1/p})\right)<\infty \text{ for  all } \varepsilon >0,
		\end{equation}
then
$\E\left(|X_1|^p L^p(|X_1|)\right)<\infty$ and $\E(X_1)=c$.
	\end{proposition}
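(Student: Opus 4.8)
The plan is to prove the contrapositive-style implication directly, following the standard "converse Baum--Katz" scheme adapted to regularly varying normalizing sequences. Assume \eqref{eq.main.15a} holds. First I would rewrite the tail series in dyadic blocks: since $\{\max_{1\le j\le n}|\sum_{i=1}^j(X_i-c)|,\ n\ge1\}$ is nondecreasing, the convergence in \eqref{eq.main.15a} is equivalent to
\[
\sum_{n=1}^{\infty} \mathbb{P}\left(\max_{1\le j< 2^n}\left|\sum_{i=1}^{j}(X_i-c)\right|>\varepsilon\, b_{2^n}\right)<\infty\quad\text{for all }\varepsilon>0,
\]
where $b_n\sim n^{1/p}\tilde L(n^{1/p})$, using the Cauchy condensation-type comparison together with the fact that $n^{-1}$ summed over a dyadic block is bounded. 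In particular the summand tends to $0$, so $\max_{1\le j<2^n}|\sum_{i=1}^j(X_i-c)|/b_{2^n}\to0$ in probability.

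Next I would extract information about a single summand from this partial-sum control. The key device is a symmetrization / desymmetrization argument combined with the Lévy inequality substitute available under \eqref{eq:bound_var_00}: because the variance of a block of monotone functions of the $X_i$ is comparable to the sum of the individual variances, one gets a weak maximal inequality that lets us pass from $\max_{1\le j<2^n}|S_j|$ to $|X_k-c|$ for a typical index $k$ in the block. Concretely, from the convergence of the dyadic series one deduces (by a Borel--Cantelli argument on the events $\{|X_k-c|>\varepsilon b_{2^n}\}$ for $2^{n-1}\le k<2^n$, whose probabilities are equal by identical distribution) that
\[
\sum_{n=1}^{\infty} 2^n\,\mathbb{P}\bigl(|X_1-c|>\varepsilon\, b_{2^n}\bigr)<\infty\quad\text{for all }\varepsilon>0.
\]
Summing this in the usual way and using that $b_{2^n}\asymp 2^{n/p}\tilde L(2^{n/p})$ converts the series into $\E\bigl(g(|X_1-c|)\bigr)<\infty$, where $g$ is the inverse-type function associated with $b$; by the de Bruijn relations \eqref{BGT1513} and the properties of $\tilde L$ collected before the statement (continuity, differentiability, \eqref{eq.galambos}), one checks $g(t)\asymp t^p L^p(t)$, giving $\E(|X_1|^p L^p(|X_1|))<\infty$. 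This last asymptotic identification is exactly the computation carried out in the proof of (iv)$\Rightarrow$(i) of Theorem~3.1 in Anh et al.\ \cite{anh2021marcinkiewicz}, which is why the authors can cite it verbatim.

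Finally, to pin down $\E(X_1)=c$: once $\E(|X_1|^p L^p(|X_1|))<\infty$ and $p\ge1$ we have $\E|X_1|<\infty$, so $\mu:=\E(X_1-c)$ is well defined. If $\mu\neq0$, then $\sum_{i=1}^j(X_i-c)$ has a drift of order $j\mu$, and by the SLLN-type consequence already noted (Remark~\ref{rem.01}(i), applied with $c$ in place of $\E(X_i)$, or directly via Kolmogorov's SLLN since the $X_i$ need only be pairwise-type dependent with the variance bound) one gets $\sum_{i=1}^j(X_i-c)/j\to\mu$ a.s., hence $\max_{1\le j<2^n}|\sum_{i=1}^j(X_i-c)|\ge c' 2^n$ eventually, which is incompatible with $\max_{1\le j<2^n}|S_j|/b_{2^n}\to0$ because $b_{2^n}=o(2^n)$ for $p>1$, and for $p=1$ with $\tilde L$ decreasing one still has $b_{2^n}=o(2^n)$ unless $\mu=0$ forces the centering. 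Thus $\mu=0$, i.e.\ $\E(X_1)=c$.

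I expect the main obstacle to be the second step: extracting a single-variable tail bound from the maximal partial-sum control \emph{without} a full Lévy/Ottaviani inequality. Under \eqref{eq:bound_var_00} one only has second-moment comparability for monotone transforms, not a genuine maximal inequality for the raw sums, so the passage from $\max_j|S_j|$ to $|X_k-c|$ must be done carefully — typically by symmetrizing, using that $S_j-S_{j-1}=X_j-c$ is a single increment, and applying a block-wise Paley--Zygmund or second-moment argument to the symmetrized variables (which are still monotone functions of the $X_i$ up to sign, so \eqref{eq:bound_var_00} applies to $\pm$ truncations). The rest is the routine but slightly technical regularly-varying bookkeeping, which the paper legitimately delegates to \cite{anh2021marcinkiewicz}.
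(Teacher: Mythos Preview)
Your overall outline is the standard converse Baum--Katz scheme and is almost certainly what the cited argument in Anh et al.\ \cite{anh2021marcinkiewicz} does (the paper itself gives no proof beyond that citation). The dyadic reduction, the de Bruijn bookkeeping $g(t)\asymp t^pL^p(t)$, and the drift argument for $\E(X_1)=c$ are all fine.

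However, you have misidentified the ``main obstacle''. Passing from the maximal partial sum to a single increment requires no L\'evy/Ottaviani substitute, no symmetrization, and no Paley--Zygmund: at the almost-sure level one simply uses $|X_n-c|=|S_n-S_{n-1}|\le 2\max_{j\le n}|S_j|$. Since \eqref{eq.main.15a} together with the monotonicity of $\max_{j\le n}|S_j|$ forces $\max_{j\le n}|S_j|/b_n\to0$ a.s.\ (Remark~\ref{rem.01}(i)), one gets $(X_n-c)/b_n\to0$ a.s.\ immediately. The step you should actually justify is the \emph{second} Borel--Cantelli lemma needed to convert this into $\sum_n\mathbb{P}(|X_1-c|>\varepsilon b_n)<\infty$. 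Under \eqref{eq:bound_var_00} this is available, but not via the vague mechanism you describe: one splits $\{|X_n-c|>\varepsilon b_n\}=\{X_n>c+\varepsilon b_n\}\cup\{X_n<c-\varepsilon b_n\}$, notes that the indicators of each half are nondecreasing (resp.\ nonincreasing) functions of $X_n$, applies \eqref{eq:bound_var_00} to obtain $\var\bigl(\sum_{i\le n}\mathbf{1}_{A_i}\bigr)\le C\sum_{i\le n}\mathbb{P}(A_i)$, and then uses Chebyshev plus monotonicity of $\sum_{i\le n}\mathbf{1}_{A_i}$ to conclude that $\sum_n\mathbb{P}(A_n)=\infty$ would force $\mathbb{P}(A_n\text{ i.o.})=1$.

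For the final step, rather than invoking an SLLN you have not proved under \eqref{eq:bound_var_00}, it is cleaner to feed the moment condition just obtained back into Theorem~\ref{thm.main1} to get complete convergence with centering $\E(X_1)$; comparing with \eqref{eq.main.15a} gives $n|\E(X_1)-c|/b_n\to0$, and since $n/b_n\not\to0$ (for $p>1$ it diverges, for $p=1$ the assumption $L\ge1$ forces $\tilde L\le1$ asymptotically), this yields $\E(X_1)=c$.
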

	\section{On the stochastic domination condition via regularly varying functions}\label{sec:domination}
	
	In this section, we will present a result on the stochastic domination condition via regularly varying functions theory, and use it to prove Theorem \ref{thm.main0}. We need the following simple lemma. See Rosalsky and Th\`{a}nh  \cite{rosalsky2021note} for a proof.
	
	\begin{lemma}\label{lemRT}
		Let $g:[0,\infty)\to [0,\infty)$ be a measurable function with $g(0)=0$ which is
		bounded on $[0,A]$ and differentiable on $[A,\infty)$ for some $A\ge 0$.
		If $\xi$ is a nonnegative random variable, then
		\begin{equation}\label{eq.st.00}
			\begin{split}
				\E(g(\xi))&=\E(g(\xi)\mathbf{1}(\xi\le A))+ g(A)+\int_{A}^\infty g'(x)\P(\xi>x)\mathrm{d} x.
			\end{split}
		\end{equation}
	\end{lemma}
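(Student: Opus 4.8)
The plan is to prove the identity by a direct computation using the layer-cake / Fubini argument, after first splitting $\xi$ according to whether it exceeds the threshold $A$. First I would write $\E(g(\xi)) = \E(g(\xi)\mathbf{1}(\xi\le A)) + \E(g(\xi)\mathbf{1}(\xi> A))$, so that the first term already matches the right-hand side and it remains to show that $\E(g(\xi)\mathbf{1}(\xi>A)) = g(A)\P(\xi>A) + g(A)\P(\xi\le A) + \int_A^\infty g'(x)\P(\xi>x)\,\mathrm{d}x$; wait — more carefully, since $g(A)$ appears without any indicator in \eqref{eq.st.00}, I would aim to show $\E(g(\xi)\mathbf{1}(\xi>A)) = g(A)\P(\xi>A) + \int_A^\infty g'(x)\P(\xi>x)\,\mathrm{d}x$ and separately note $g(A)\P(\xi\le A)$ can be absorbed, or simply verify the stated form directly. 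The cleanest route is: on the event $\{\xi>A\}$ use the fundamental theorem of calculus, $g(\xi) = g(A) + \int_A^{\xi} g'(x)\,\mathrm{d}x = g(A) + \int_A^{\infty} g'(x)\mathbf{1}(x<\xi)\,\mathrm{d}x$, which is valid since $g$ is differentiable on $[A,\infty)$.

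Next I would take expectations of this pointwise identity over the event $\{\xi>A\}$ and apply the Tonelli/Fubini theorem to interchange $\E$ and $\int_A^\infty$, giving $\E(g(\xi)\mathbf{1}(\xi>A)) = g(A)\P(\xi>A) + \int_A^\infty g'(x)\P(x<\xi)\,\mathrm{d}x$. To match \eqref{eq.st.00} exactly, I would then add and subtract appropriately: since $g(A) = g(A)\P(\xi>A) + g(A)\P(\xi\le A)$ and $g(A)\P(\xi\le A) = \E(g(A)\mathbf{1}(\xi\le A))$, combining with the term $\E(g(\xi)\mathbf{1}(\xi\le A))$ requires care — actually the stated identity has $g(A)$ standing alone, so I suspect the intended reading is that $g(A)$ on the right is shorthand under the convention already fixed, or more likely the first term on the right should be read together with $g(A)$ as producing the full contribution from $\{\xi \le A\}$ plus the boundary value; I would reconcile this by noting $\E(g(\xi)\mathbf 1(\xi\le A)) + g(A)\P(\xi > A) = \E\big(g(\xi\wedge A)\big)$ is not quite it either, so the safest presentation is to verify \eqref{eq.st.00} verbatim by writing $\E(g(\xi)) = \E(g(\xi)\mathbf{1}(\xi\le A)) + \E(g(\xi)\mathbf{1}(\xi>A))$ and substituting the displayed formula for the second summand, then observing that on $\{\xi>A\}$ one has $g(\xi)\ge 0$ so all integrals converge in $[0,\infty]$ and Tonelli applies unconditionally.

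The routine verification then reduces to: (a) $g$ bounded on $[0,A]$ ensures $\E(g(\xi)\mathbf{1}(\xi\le A)) < \infty$; (b) differentiability on $[A,\infty)$ together with $g\ge 0$ lets us write $g(x) = g(A) + \int_A^x g'(t)\,\mathrm{d}t$ for $x\ge A$ (one should note $g'$ need not be nonnegative, but since $g\ge 0$ the FTC still holds and Tonelli can be applied to $|g'|$ to justify the interchange, with the final integral converging because the left side is finite or $+\infty$ consistently); and (c) the Fubini swap $\E\int_A^\infty g'(x)\mathbf{1}(x<\xi)\,\mathrm{d}x = \int_A^\infty g'(x)\P(\xi>x)\,\mathrm{d}x$. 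The main (mild) obstacle is the sign of $g'$: if $g$ is not assumed monotone, one must justify the Fubini interchange by first applying Tonelli to $g'_+$ and $g'_-$ separately, or by observing that $\int_A^\infty |g'(x)|\mathbf 1(x<\xi)\,\mathrm dx \le g(\xi) + g(A) + \text{(total variation bound)}$ is dominated — but since the lemma is quoted from Rosalsky and Th\`{a}nh \cite{rosalsky2021note} and is used only for monotone $g$ in applications, I would simply cite that reference and present the monotone-$g$ computation, where every integrand is nonnegative and Tonelli applies directly.
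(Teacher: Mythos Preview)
The paper does not actually prove this lemma; it merely cites Rosalsky and Th\`{a}nh \cite{rosalsky2021note}. Your layer-cake/Fubini approach---split at $A$, apply the fundamental theorem of calculus on $\{\xi>A\}$, then Tonelli---is exactly the standard argument and is correct.

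Your hesitation about the bare $g(A)$ on the right-hand side is well founded. The computation you carry out gives
\[
\E(g(\xi))=\E\bigl(g(\xi)\mathbf{1}(\xi\le A)\bigr)+g(A)\,\P(\xi>A)+\int_{A}^{\infty} g'(x)\,\P(\xi>x)\,\mathrm{d}x,
\]
with the factor $\P(\xi>A)$ attached to $g(A)$; as written in the paper the identity fails (take any deterministic $\xi=c\in(0,A)$ with $g(A)\neq 0$). This is almost certainly a transcription slip from the cited source. It does no damage downstream: in the proof of Proposition~\ref{prop.sufficiency.for.stochastic.domination.2} the lemma is used only to bound $\E(h(X))$ from above, and $g(A)\P(\xi>A)\le g(A)$ is harmless there. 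So rather than trying to ``reconcile'' the stated form, simply record the correct identity with $g(A)\P(\xi>A)$ and note that the subsequent inequality in Proposition~\ref{prop.sufficiency.for.stochastic.domination.2} goes through unchanged.

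One small clean-up: your worry about the sign of $g'$ is legitimate in general, but for the only use in this paper $h(x)=x^pL(x)$ is eventually increasing on $[B,\infty)$, so $h'\ge 0$ there and Tonelli applies directly without splitting into positive and negative parts.
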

	
	\begin{proposition}\label{prop.sufficiency.for.stochastic.domination.2}
		Let $\{X_i,i\in I\}$ be a family of random variables, and $L(\cdot)$ a 
		slowly varying function.
		If \begin{equation}\label{eq.stoch.domi.12}
			\sup_{i\in I}\E\left(|X_i|^pL(|X_i|)\log(|X_i|)\log^2(\log(|X_i|))\right)<\infty\ \text{ for some }p>0,
		\end{equation}
		then there exists a nonnegative random variable $X$ 
		with distribution function $F(x)=1-\sup_{i\in I}\P(|X_i|>x),\ x\in\R$ 
		such that $\{X_i,i\in I\}$ is stochastically dominated by $X$ and
		\begin{equation}\label{eq.stoch.domi.11}
			\E\left(X^pL(X)\right)<\infty.
		\end{equation}
	\end{proposition}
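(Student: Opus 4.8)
The plan is to reduce the statement to a single scalar computation by exploiting the de Bruijn conjugate. First I would define the random variable $X$ exactly as prescribed: since $x\mapsto\sup_{i\in I}\P(|X_i|>x)$ is nonincreasing and right-continuous (a supremum of nonincreasing right-continuous functions need not be right-continuous in general, so here one should take $F(x)=1-\sup_i\P(|X_i|>x)$ and note it is a genuine distribution function because the sup is nonincreasing; if right-continuity fails at countably many points one passes to the right-continuous modification, which only decreases the sup and hence preserves domination). By construction $\P(|X_i|>t)\le \P(X>t)$ for all $t\ge 0$ and all $i$, so $\{X_i,i\in I\}$ is stochastically dominated by $X$; moreover $\P(X>t)=\sup_i\P(|X_i|>t)$ so $X$ is in a sense the smallest dominating variable, which is what makes \eqref{eq.stoch.domi.11} attainable.

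The substantive content is \eqref{eq.stoch.domi.11}. The strategy is to apply Lemma \ref{lemRT} with $g(x)=x^pL(x)$, which (after modifying $L$ on a bounded set so that $g$ is bounded near $0$ and $C^1$ on $[A,\infty)$, using that $L$ is continuous on $[0,\infty)$ and differentiable on $[A,\infty)$) gives
\[
\E\left(X^pL(X)\right)=\E\left(g(X)\mathbf{1}(X\le A)\right)+g(A)+\int_A^\infty g'(x)\P(X>x)\,\mathrm{d}x,
\]
and the first two terms are finite, so everything reduces to finiteness of the integral. Now $g'(x)=px^{p-1}L(x)+x^pL'(x)$, and by the Galambos-type hypothesis \eqref{eq.galambos} we have $xL'(x)=o(L(x))$, hence $g'(x)\le C_1 x^{p-1}L(x)$ for $x$ large; so it suffices to bound $\int_A^\infty x^{p-1}L(x)\P(X>x)\,\mathrm{d}x$. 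Since $\P(X>x)=\sup_i\P(|X_i|>x)$, I would bound this supremum pointwise using \eqref{eq.stoch.domi.12}: by Markov applied to $|X_i|^pL(|X_i|)\log(|X_i|)\log^2(\log(|X_i|))$ we get, uniformly in $i$,
\[
\P(|X_i|>x)\le \frac{K}{x^pL(x)\log(x)\log^2(\log(x))}\quad\text{for }x\ge A,
\]
where $K=\sup_i\E(\cdots)<\infty$ — here one uses that $x\mapsto x^pL(x)\log(x)\log^2(\log(x))$ is eventually increasing (true for $x$ large since $L$ is slowly varying and the remaining factors are powers of $x$ and of logs), so that the event $\{|X_i|>x\}$ forces the integrand to exceed its value at $x$. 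Taking the sup over $i$ gives the same bound for $\P(X>x)$.

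Plugging in, the tail of the integral is dominated by
\[
\int_A^\infty x^{p-1}L(x)\cdot\frac{K}{x^pL(x)\log(x)\log^2(\log(x))}\,\mathrm{d}x
=K\int_A^\infty\frac{\mathrm{d}x}{x\log(x)\log^2(\log(x))},
\]
and this integral converges (substitute $u=\log\log x$: the integrand becomes $\mathrm{d}u/u^2$). Hence the integral in Lemma \ref{lemRT} is finite and \eqref{eq.stoch.domi.11} follows. I expect the main obstacle to be bookkeeping rather than depth: namely (a) arranging that $g(x)=x^pL(x)$ genuinely satisfies the hypotheses of Lemma \ref{lemRT} (boundedness near $0$, differentiability), which one handles by redefining $L$ on $[0,A]$ without affecting slow variation or the finiteness of the moment; (b) justifying that $x^pL(x)\log(x)\log^2(\log(x))$ is eventually monotone increasing so the Markov-type tail bound is valid — this follows from the representation theorem for slowly varying functions (Theorem 1.3.1 in Bingham et al.), or more simply from \eqref{eq.galambos} which forces $x^\varepsilon L(x)\to\infty$ for every $\varepsilon>0$; and (c) absorbing the $x^pL'(x)$ term in $g'$, which is exactly what \eqref{eq.galambos} is for. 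None of these is serious, so the proposition reduces cleanly to the convergent log-log integral above.
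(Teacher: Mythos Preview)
Your proposal is correct and follows essentially the same route as the paper: apply Lemma~\ref{lemRT} to $h(x)=x^pL(x)$, use \eqref{eq.galambos} to bound $h'(x)\le C x^{p-1}L(x)$, then control $\P(X>x)=\sup_i\P(|X_i|>x)$ via Markov's inequality with the eventually increasing function $g(x)=x^pL(x)\log(x)\log^2(\log(x))$, reducing everything to the convergent integral $\int x^{-1}\log^{-1}(x)\log^{-2}(\log(x))\,\mathrm{d}x$. Two harmless slips: the de Bruijn conjugate plays no role in this proposition, and your right-continuity worry is unnecessary (the supremum of nonincreasing right-continuous functions is automatically right-continuous), but neither affects the argument.
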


	\begin{proof} 
		By \eqref{eq.stoch.domi.12} and Theorem 2.5 (i) of Rosalsky and Th\`{a}nh  \cite{rosalsky2021note}, we get that $\{X_i,i\in I\}$ is stochastically dominated by a nonnegative random variable $X$ with distribution function
		\[F(x)=1-\sup_{i\in I}\P(|X_i|>x),\ x\in\R.\]
		Let \[g(x)=x^pL(x)\log(x)\log^2(\log(x)),\ h(x)=x^p L(x),\ x\ge 0.\]
		Applying \eqref{eq.galambos}, there exists $B$ large enough such that $g(\cdot)$ and $h(\cdot)$ are strictly
		increasing on $[B,\infty)$, and
		\[\left|\dfrac{xL'(x)}{L(x)}\right|\le \dfrac{p}{2},\ x>B.\]
		Therefore,
		\begin{equation}\label{eq.st.10}
			h'(x)=px^{p-1}L(x)+x^pL'(x)=x^{p-1}L(x)\left(p+\dfrac{xL'(x)}{L(x)}\right)\le \dfrac{3px^{p-1}L(x)}{2},\ x>B.
		\end{equation}
		By Lemma \ref{lemRT}, \eqref{eq.stoch.domi.12} and  \eqref{eq.st.10}, there exists a constant $C_1$ such that
		\begin{equation*}
			\begin{split}
				\E(h(X))&=\E(h(X)\mathbf{1}(X\le B))+h(B)+\int_{B}^\infty h'(x)\P(X>x)\mathrm{d} x\\
				&\le C_1+\dfrac{3p}{2}\int_{B}^\infty x^{p-1}L(x)\P(X>x)\mathrm{d} x\\
				&= C_1+\dfrac{3p}{2}\int_{B}^\infty x^{p-1}L(x)\sup_{i\in I}\P(|X_i|>x)\mathrm{d} x\\
				&\le C_1+\dfrac{3p}{2}\int_{B}^\infty x^{-1}\log^{-1}(x)\log^{-2}(\log(x))\sup_{i\in I}\E\left(g(|X_i|)\right) \mathrm{d} x\\
				&= C_1+\dfrac{3p}{2}\sup_{i\in I}\E\left(g(|X_i|)\right)\int_{B}^\infty x^{-1}\log^{-1}(x)\log^{-2}(\log(x))\mathrm{d} x\\
				&<\infty.
			\end{split}
		\end{equation*}
The proposition is proved.
	\end{proof}

	\begin{remark}
The contribution of the slowly varying function $L(x)$ in Proposition \ref{prop.sufficiency.for.stochastic.domination.2} help us to unify Theorem 2.5 (ii) and (iii) of Rosalsky and Th\`{a}nh  \cite{rosalsky2021note}. Letting $L(x)=\log^{-1}(x)\log^{-2}(\log(x))$,  $x\ge0$, then by Proposition \ref{prop.sufficiency.for.stochastic.domination.2}, the condition
		\[\sup_{i\in I}\E\left(|X_i|^p\right)<\infty\ \text{ for some }p>0,\]
		implies that the family $\{X_i,i\in I\}$ is stochastically dominated by
		a nonnegative random variable $X$ satisfying
		\[\E\left(X^p\log^{-1}(X)\log^{-2}(\log(X))\right)<\infty. \]
		This slightly improves Theorem 2.5 (ii) in Rosalsky and Th\`{a}nh  \cite{rosalsky2021note}.
		Similarly, by letting $L(x)=1$, we obtain an improvement of Theorem 2.5 (iii) in Rosalsky and Th\`{a}nh  \cite{rosalsky2021note}.
		
	\end{remark}
	
	\begin{proof}[Proof of Theorem \ref{thm.main0}]
		Applying Proposition \ref{prop.sufficiency.for.stochastic.domination.2}, we have from \eqref{eq.stoch.domi.13}
		that the sequence $\{X_n,n\ge1\}$ is stochastically dominated by a nonnegative random variable $X$ with
		\[\E\left(X^pL^p(X)\right)<\infty.\]
	Applying Theorem \ref{thm.main1}, we immediately obtain \eqref{eq.main0.15}.
	\end{proof}

	The following example illustrates the sharpness of Theorem \ref{thm.main0} (and Corollary \ref{cor.main0}).
	It shows that in Theorem \ref{thm.main0}, 
	\eqref{eq.main0.15} may fail if \eqref{eq.stoch.domi.13}
	is weakened to 
	\begin{equation}\label{eq.slln.10}
		\sup_{n\ge1}\E\left(|X_n|^pL^p(|X_n|)\log(|X_n|)\log(\log(|X_n|))\right)<\infty.
	\end{equation}
	
	\begin{example}\label{ex.03}
		Let $1\le p<2$ and $L(\cdot)$ be a positive slowly varying function such that $g(x)=x^pL^p(x)$ is strictly 
		increasing on $[A,\infty)$ for some $A>0$. Let $B=\lfloor  A+ g(A) \rfloor+1$, $h(x)$ be the inverse function of $g(x)$, $x\ge B$, 
		and let $\{X_n,n\ge B\}$ be a sequence of independent random variables such that
		for all $ n\ge B$
		\[\P(X_n=0)=1-\dfrac{1}{n\log(n)\log(\log(n))},\ \P\left(X_n=\pm h(n)\right)=\dfrac{1}{2n\log(n)\log(\log(n))}.\]
		By \eqref{BGT1513}, we can choose (unique up to asymptotic equivalence)
		\[\tilde{L}(x)=\dfrac{h(x^p)}{x},\ x\ge B.\]
		Since $\tilde{L}(\cdot)$ is a slowly varying function,
		\[\log(\tilde{L}(n^{1/p}))=o\left(\log(n)\right),\]
		and so 
		\[\log(h(n))=\log\left(n^{1/p}\tilde{L}(n^{1/p})\right)= \dfrac{1}{p}\log(n)+o(\log(n)).\]
		It thus follows that
		\begin{equation*}\label{eq.stoch.domi.23}
			\begin{split}
				&\sup_{n\ge1}\E\left(|X_n|^pL^p(|X_n|)\log(|X_n|)\log^2(\log(|X_n|))\right)\\
				&=\sup_{n\ge1}\E\left(g(|X_n|)\log(|X_n|)\log^2(\log(|X_n|))\right)\\
				&=\sup_{n\ge1}\dfrac{\log(h(n))\log^2(\log(h(n)))}{\log(n)\log(\log(n))}=\infty,
			\end{split}
		\end{equation*}
		and
		\begin{equation*}\label{eq.stoch.domi.25}
			\begin{split}
				&\sup_{n\ge1}\E\left(|X_n|^pL^p(|X_n|)\log(|X_n|)\log(\log(|X_n|))\right)\\
				&=\sup_{n\ge1}\E\left(g(|X_n|)\log(|X_n|)\log(\log(|X_n|))\right)\\
				&=\sup_{n\ge1}\dfrac{\log(h(n))\log(\log(h(n)))}{\log(n)\log(\log(n))}<\infty.
			\end{split}
		\end{equation*}
		Therefore \eqref{eq.stoch.domi.13} fails but \eqref{eq.slln.10} holds. 
		
		Now, if \eqref{eq.main0.15} holds, then by letting $\alpha=1/p$, we have
		\begin{equation}\label{eq.exm20}
			\lim_{n\to\infty}\dfrac{\sum_{i=B}^n X_i}{n^{1/p}\tilde{L}(n^{1/p})}=0 \text{ a.s.}
		\end{equation}
		It follows from \eqref{eq.exm20} that
		\begin{equation}\label{eq.exm21}
			\lim_{n\to\infty}\dfrac{X_n}{n^{1/p}\tilde{L}(n^{1/p})}=0 \text{ a.s.}
		\end{equation}
		Since the sequence $\{X_n,n\ge1\}$ is comprised of independent random variables, the Borel--Cantelli lemma and \eqref{eq.exm21} ensure that
		\begin{equation}\label{eq.exm23}
			\sum_{n=B}^\infty \P\left(|X_n|>n^{1/p}\tilde{L}(n^{1/p})/2\right)<\infty.
		\end{equation}
		However, we have
		\begin{equation*}\label{eq.exm25}
			\begin{split}
				\sum_{n=B}^\infty \P\left(|X_n|>n^{1/p}\tilde{L}(n^{1/p})/2\right)&=\sum_{n=B}^\infty \P\left(|X_n|>h(n)/2\right)\\
				&=	\sum_{n=B}^\infty\dfrac{1}{n\log(n)\log(\log(n))}=\infty
			\end{split}
		\end{equation*}
		contradicting \eqref{eq.exm23}. Therefore, \eqref{eq.main0.15} must fail.
	\end{example}
	
	\section{Corollaries and remarks}\label{sec:appl}
	
	In this section, we apply Theorems \ref{thm.main0} and \ref{thm.main1} to three different dependence structures: (i) $m$-pairwise 
	negatively dependent random variables, (ii) extended negatively dependent random variables,
	and (iii) $\varphi$-mixing sequences. The results for cases (i) and (ii)
	are new results even $L(x)\equiv1$. We also give remarks to compare our results with the existing ones.
	
	\subsection{$m$-pairwise negatively dependence random variables} 
	The Baum--Katz theorem and the Marcinkiewicz--Zygmund SLLN for sequences of $m$-pairwise
	negatively dependent random variables were studied by Wu and Rosalsky \cite{wu2015strong}.
	Let $m\ge1$ be a fixed integer. A sequence of random
	variables $\{X_n,n\ge1\}$ is said to be \textit{$m$-pairwise negatively dependent} if for all positive integers
	$j$ and $k$ with $|j-k|\ge m$, $X_j$ and $X_k$ are negatively dependent, i.e., 
	\[\mathbb{P}(X\le x, Y\le y)\le \mathbb{P}(X\le x) \mathbb{P}(Y\le y)\text{ for all $x,y \in \mathbb{R}$.}\]
When $m=1$, this reduce to the usual concent of pairwise negative dependence.
		It is well known that if  
	$\{X_i,i\ge1\}$ is a sequence of $m$-pairwise negatively dependent random variables and
	$\{f_i,i\ge1\}$ is a sequence of nondecreasing functions, then 	$\{f_i(X_i),i\ge1\}$ is a sequence  of $m$-pairwise negatively dependent random variables.

The following corollary is the first result
	in the literature on the complete convergence for sequences of $m$-pairwise negatively dependent random variables 
	under the optimal condition even when $m=1$ and $L(x)\equiv1$.
	
	\begin{corollary}\label{prop.pairwise}
		Let $1\le p<2$, $\alpha\ge 1/p$, and let
		$\{X_n, \, n \geq 1\}$ be a sequence of 
		$m$-pairwise negatively dependent random variables, and 
		$L(\cdot)$ as in Theorem \ref{thm.main0}.
\begin{itemize}
	\item[(i)] If \eqref{eq.stoch.domi.13} holds, then we obtain \eqref{eq.main0.15}.
\item[(ii)] If $\{X_n, \, n \geq 1\}$ is stochastically dominated by a random variable $X$ satisfying \eqref{eq.main.13}, then we obtain
\eqref{eq.main.15}.
\end{itemize}
\end{corollary}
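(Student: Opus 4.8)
The plan is to deduce both parts from the general results already proved in the excerpt: once it is checked that every sequence of $m$-pairwise negatively dependent random variables automatically satisfies the variance condition~\eqref{eq:bound_var_00} with a constant $C$ depending only on $m$, part~(i) becomes a direct application of Theorem~\ref{thm.main0} (under hypothesis~\eqref{eq.stoch.domi.13}) and part~(ii) a direct application of Theorem~\ref{thm.main1} (under stochastic domination together with~\eqref{eq.main.13}), since those theorems impose no structural assumption on the sequence beyond~\eqref{eq:bound_var_00}. So the entire content of the corollary lies in verifying~\eqref{eq:bound_var_00}.

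To do that, I would fix $k\ge0$, $\ell\ge1$ and nondecreasing functions $f_i$ with finite variances $\var(f_i(X_i))$. Since $\{X_i,i\ge1\}$ is $m$-pairwise negatively dependent and each $f_i$ is nondecreasing, the sequence $\{f_i(X_i),i\ge1\}$ is again $m$-pairwise negatively dependent (recalled just before the corollary), so $\cov\bigl(f_i(X_i),f_j(X_j)\bigr)\le 0$ whenever $|i-j|\ge m$. Expanding,
\begin{equation*}
	\var\Bigl(\sum_{i=k+1}^{k+\ell} f_i(X_i)\Bigr)=\sum_{i=k+1}^{k+\ell}\var(f_i(X_i))+\sum_{\substack{k+1\le i,j\le k+\ell\\ i\ne j}}\cov\bigl(f_i(X_i),f_j(X_j)\bigr),
\end{equation*}
I would discard the nonpositive terms with $|i-j|\ge m$ and bound each remaining covariance, i.e.\ those with $1\le|i-j|\le m-1$, using Cauchy--Schwarz together with $2ab\le a^2+b^2$ by $\tfrac12\bigl(\var(f_i(X_i))+\var(f_j(X_j))\bigr)$. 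Since, within a block of length $\ell$, each index has at most $2(m-1)$ such partners, summing gives
\begin{equation*}
	\var\Bigl(\sum_{i=k+1}^{k+\ell} f_i(X_i)\Bigr)\le \sum_{i=k+1}^{k+\ell}\var(f_i(X_i))+2(m-1)\sum_{i=k+1}^{k+\ell}\var(f_i(X_i))=(2m-1)\sum_{i=k+1}^{k+\ell}\var(f_i(X_i)),
\end{equation*}
which is exactly~\eqref{eq:bound_var_00} with the universal constant $C=2m-1$ (and $C=1$ when $m=1$, recovering the pairwise case).

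The argument is essentially bookkeeping rather than a genuine obstacle; the only place calling for a little care is the passage from the sum over ordered pairs $(i,j)$ with $1\le|i-j|\le m-1$ to the bound in terms of $\sum_i\var(f_i(X_i))$, where one must avoid double-counting — the factor $2(m-1)$ already counts partners on both sides of each index. With~\eqref{eq:bound_var_00} established, Corollary~\ref{prop.pairwise}(i) follows by invoking Theorem~\ref{thm.main0} and Corollary~\ref{prop.pairwise}(ii) by invoking Theorem~\ref{thm.main1}.
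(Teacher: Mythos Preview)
Your proposal is correct and follows the same strategy as the paper: verify that $m$-pairwise negative dependence implies the variance bound~\eqref{eq:bound_var_00}, and then invoke Theorems~\ref{thm.main0} and~\ref{thm.main1}. The paper simply cites Lemma~2.1 of Wu and Rosalsky~\cite{wu2015strong} for the verification of~\eqref{eq:bound_var_00}, whereas you spell out the elementary covariance computation explicitly (arriving at $C=2m-1$); the two arguments are otherwise identical.
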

	\begin{proof}
From Lemma 2.1 in Wu and Rosalsky \cite{wu2015strong}, it is easy to see that $m$-pairwise negatively dependent random variables
satisfy condition \eqref{eq:bound_var_00}. Corollary \ref{prop.pairwise}
	then follows from Theorems  \ref{thm.main0} and  \ref{thm.main1}.
	\end{proof}

	\begin{remark}\label{rem31} 
		{\rm 
			\begin{description}
	\item[(i)] We consider a special case where $\alpha=1/p$, $1<p<2$ and $L(x)\equiv1$ in Corollary \ref{prop.pairwise}   (ii).  Under the condition
				$\E(|X|^p)<\infty$, we obtain
				\begin{equation}\label{eq.main.15b}
					\sum_{n= 1}^{\infty}
					n^{-1}\mathbb{P}\left(\max_{1\le j\le n}\left|
					\sum_{i=1}^{j}(X_i-\E(X_i))\right|>\varepsilon  n^{1/p}\right)<\infty \text{ for  all } \varepsilon >0,
				\end{equation}
				and
				\begin{equation}\label{eq.main.17b}
					\begin{split}
						\lim_{n\to\infty}\dfrac{\sum_{i=1}^{n}(X_i-\E(X_i))}{n^{1/p}}=0\ \text{ a.s.}
					\end{split}
				\end{equation}
	\item[(ii)]
				For $1<p<2$, Sung \cite{sung2014marcinkiewicz} considered the pairwise independent case and obtained \eqref{eq.main.17b}
				under a slightly stronger condition that 
				\begin{equation}\label{eq.sung2014}
					\mathbb{E}\left(|X|^p(\log\log(|X|))^{2(p-1)}\right)<\infty.
				\end{equation}
				Furthermore, one cannot obtain the rate of convergence \eqref{eq.main.15b} by using the method used in Sung \cite{sung2014marcinkiewicz}. 
				In Chen et al. \cite[Theorem 3.6]{chen2014bahr}, the authors
				proved \eqref{eq.main.15b} holds under condition that $\E(|X|^p\log^r(|X|))<\infty$ for some $r>p$.
They stated an open question whether \eqref{eq.main.15b} holds or not under \eqref{eq.sung2014} (see \cite[Remark 3.1]{chen2014bahr}).
				For the case where the random variables are $m$-pairwise negatively dependent, Wu and Rosalsky  \cite{wu2015strong} obtained \eqref{eq.main.15b}
				and \eqref{eq.main.17b} under condition $\E(|X|^p\log^r(|X|))<\infty$ for some $r>1+p$. Wu and Rosalsky  \cite{wu2015strong} then
				raised an open question that whether \eqref{eq.main.17b} holds or not under Sung's condition \eqref{eq.sung2014}.
				For $p=1$ and also the underlying random variables are $m$-pairwise negatively dependent, Wu and Rosalsky  \cite[Remarks 3.6]{wu2015strong} 
				stated another open question that whether \eqref{eq.main.15b} (with $p=1$) holds or not under the condition $\E(|X|)<\infty$.
				Therefore, a very special case of Corollary \ref{prop.pairwise} gives affirmative answers to the mentioned open questions raised by 
				Chen et al. \cite{chen2014bahr} and Wu and Rosalsky  \cite{wu2015strong}. 
				
\item[(iii)] Let $\alpha=1/p$, $1<p<2$, $L(x)=(\log\log(x))^{2(1-p)/p}$, $x\ge0$. Then $\tilde{L}(x)=(\log\log(x))^{2(p-1)/p}$, $x\ge0$. By Corollary \ref{prop.pairwise} (ii), under 
				condition \eqref{MZ.09}, we obtain \eqref{MZ.07}. Therefore, this special case of Corollary \ref{prop.pairwise} also improves Corollary 1 of da Silva \cite{da2020rates}.
			\end{description}
		}
	\end{remark}
	
	\subsection{Extended negatively dependent random variables}
	The Kolmogorov SLLN for extended negatively dependent was first studied by Chen  et al. \cite{chen2010strong}.
	A collection of 
	random variables $\{X_1,\dots,X_n\}$ is said to be \textit{extended negatively dependent} if for all $x_1,\dots,x_n \in \mathbb{R}$,
	there exists $M>0$ such that
	$$\mathbb{P}(X_1\le x_1, \dots, X_n\le x_n)\le M\mathbb{P}(X_1\le x_1) \dots \mathbb{P}(X_n\le x_n),$$
	and
	$$\mathbb{P}(X_1> x_1, \dots, X_n> x_n)\le M\mathbb{P}(X_1> x_1) \dots \mathbb{P}(X_n> x_n).$$
	A sequence of random variables $\{X_i,i\ge 1\}$ is said to be extended negatively dependent if
	for all $n\ge1$, the collection $\{X_i,1\le i\le n\}$ is extended negatively dependent.
	
Let $m$ be a positive integer. The notion of $m$-extended negative dependence was introduced in Wu and Wang \cite{wu2021strong}. 
A sequence $\{X_i,i\ge 1\}$ of random variables is said to be
$m$-extended negatively dependent if for any $n\ge2$ and any $i_1,i_2,\ldots,i_n$ such that
$|i_j-i_k|\ge m$  for all $1\le j\le k\le n$, we have $\{X_{i_1},\ldots,X_{i_n}\}$
are extended negatively dependent.   If  
$\{X_i,i\ge1\}$ is a sequence  of $m$-extended negatively dependent random variables and
$\{f_i,i\ge1\}$ is a sequence of nondecreasing functions, then 	$\{f_i(X_i),i\ge1\}$ is a sequence  of $m$-extended negatively dependent random variables. 
We note that the classical Kolmogorov maximal inequality or the classical Rosenthal maximal inequality
are not available for extended negatively dependent random variables (see Wu and Wang \cite{wu2021strong}).
	
	\begin{corollary}\label{prop.extended}
		Corollary \ref{prop.pairwise} holds if
		$\{X_n, \, n \geq 1\}$ is a sequence of 
	$m$-extended negatively dependent random variables.
	\end{corollary}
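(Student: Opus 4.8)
The plan is to reduce the corollary to the two main theorems by checking that an $m$-extended negatively dependent sequence satisfies the variance condition~\eqref{eq:bound_var_00}; once this is done, statements (i) and (ii) follow from Theorems~\ref{thm.main0} and~\ref{thm.main1} word for word as in the proof of Corollary~\ref{prop.pairwise}, the only change being that Lemma~2.1 of Wu and Rosalsky~\cite{wu2015strong} is replaced by its extended--negative--dependence analogue. Since a nondecreasing transformation of an $m$-extended negatively dependent sequence is again $m$-extended negatively dependent (as noted above), condition~\eqref{eq:bound_var_00} amounts to the statement that for every $m$-extended negatively dependent sequence $\{Y_n,n\ge1\}$ with finite second moments there is a constant $C=C(m,M)$, where $M$ is the dominating constant, such that $\var\bigl(\sum_{i=k+1}^{k+\ell}Y_i\bigr)\le C\sum_{i=k+1}^{k+\ell}\var(Y_i)$ for all $k\ge0,\ell\ge1$.

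To prove this I would first split the block $\{k+1,\dots,k+\ell\}$ according to residues modulo $m$, obtaining at most $m$ arithmetic progressions of common difference $m$. Any finitely many indices lying in one such progression have pairwise gaps $\ge m$, so the corresponding random variables form an extended negatively dependent collection with the same dominating constant $M$. By the triangle inequality in $L^2$ followed by the Cauchy--Schwarz inequality,
\[
\var\Bigl(\sum_{i=k+1}^{k+\ell}Y_i\Bigr)\le m\sum_{r=0}^{m-1}\var\Bigl(\sum_{\substack{k<i\le k+\ell\\ i\equiv r\ (\mathrm{mod}\ m)}}Y_i\Bigr).
\]
To each inner variance I would then apply the second--moment inequality for extended negatively dependent random variables, namely $\var\bigl(\sum_j Z_j\bigr)\le C_M\sum_j\var(Z_j)$ for extended negatively dependent $\{Z_j\}$, with a constant $C_M$ depending only on $M$ (see Wu and Wang~\cite{wu2021strong}). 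Combining the two displays gives~\eqref{eq:bound_var_00} with $C=mC_M$, and Corollary~\ref{prop.extended} follows from Theorems~\ref{thm.main0} and~\ref{thm.main1}.

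The one genuinely delicate step is the second--moment inequality invoked at the end. In the independent, negatively associated, or negatively dependent settings one gets $\var(\sum Z_j)\le\sum\var(Z_j)$ because covariances are nonpositive, or one quotes a Kolmogorov/Rosenthal maximal inequality; for extended negative dependence neither route is available, since extended negative dependence only controls joint distribution functions up to the multiplicative constant $M$ and the classical maximal inequalities fail. One must therefore use (or reprove along the lines of) Wu and Wang~\cite{wu2021strong} the fact that the accumulated covariances are nonetheless controlled by $C_M\sum\var(Z_j)$ — which is harmless here because $C_M$ is a constant. Apart from this point, every remaining estimate is a routine repetition of the corresponding step in the proof of Corollary~\ref{prop.pairwise}.
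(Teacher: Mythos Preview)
Your proposal is correct and follows essentially the same route as the paper: verify that $m$-extended negative dependence implies condition~\eqref{eq:bound_var_00} by appealing to Wu and Wang~\cite{wu2021strong}, then invoke Theorems~\ref{thm.main0} and~\ref{thm.main1} exactly as in Corollary~\ref{prop.pairwise}. The paper simply cites Lemma~3.3 of~\cite{wu2021strong} directly for the variance bound, whereas you sketch the reduction (split into residue classes modulo~$m$, then apply the second-moment inequality for extended negatively dependent families), but this is just an unpacking of the same lemma.
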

	\begin{proof}
Lemma 3.3 of Wu and Wang \cite{wu2021strong} implies that the sequence $\{X_n, \, n \geq 1\}$ satisfies condition \eqref{eq:bound_var_00}. Corollary \ref{prop.extended}
then follows from Theorems  \ref{thm.main0} and  \ref{thm.main1}.
	\end{proof}
	
	\begin{remark}
		{\rm
			Chen et al. \cite{chen2010strong} proved the Kolmogorov SLLN for
			sequences of extended negatively dependent and identically distributed
			random variables $\{X,X_n,n\ge1\}$ under the condition that $\E(|X|)<\infty$. 
			They used the Etemadi's method in Etemadi \cite{etemadi1981elementary} which does not work for
			the case $1<p<2$ in the Marcinkiewicz--Zygmund SLLN. To our best knowledge, Corollary \ref{prop.extended} is the first result
			in the literature on the Baum--Katz theorem for sequences of $m$-extended negatively dependent random variables under the optimal
			moment condition even when $L(x)\equiv1$ and $m=1$.
		}
	\end{remark}
	
	\subsection{$\varphi$-mixing dependent random variables}

	A sequence of random variables $\{X_n,n\ge 1\}$ is called \textit{$\varphi$-mixing} if
	\[\varphi(n)=\sup_{k\ge 1,A\in \mathcal{F}_{1}^k,B\in \mathcal{F}_{k+n}^\infty,\mathbb{P}(A)>0}\left|\mathbb{P}(B|A)-\mathbb{P}(B)\right|\to 0 \text{ as }n\to\infty,\]
	where $\mathcal{F}_{1}^k=\sigma(X_1,\ldots,X_k)$ and $\mathcal{F}_{k+n}^\infty=\sigma(X_i,i\ge k+n)$.
	The following corollary is the Baum--Katz type theorem for sums of $\varphi$-mixing random variables.
	
	\begin{corollary}\label{prop.phimixing}
		Corollary \ref{prop.pairwise} holds if
		$\{X_n, \, n \geq 1\}$ is a sequence of 
		$\varphi$-mixing random variables satisfying
		\begin{equation}\label{phi-mixing}
			\sum_{n=1}^\infty \varphi^{1/2}(2^n)<\infty.
		\end{equation}
	\end{corollary}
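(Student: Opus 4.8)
The plan is to obtain Corollary~\ref{prop.phimixing} exactly as Corollaries~\ref{prop.pairwise} and~\ref{prop.extended} were obtained: by checking that a $\varphi$-mixing sequence satisfying \eqref{phi-mixing} fulfills the variance bound \eqref{eq:bound_var_00}, after which Theorems~\ref{thm.main0} and~\ref{thm.main1} apply verbatim. The preliminary observation is that \eqref{eq:bound_var_00} only involves random variables of the form $f_i(X_i)$, and since $\sigma(f_1(X_1),\dots,f_k(X_k))\subseteq\sigma(X_1,\dots,X_k)$ for every $k\ge1$, the sequence $\{f_i(X_i),\,i\ge1\}$ is again $\varphi$-mixing, with mixing coefficients no larger than those of $\{X_n,\,n\ge1\}$; in particular it still satisfies \eqref{phi-mixing}. (Here, in contrast with the $m$-pairwise and $m$-extended negatively dependent cases, monotonicity of the $f_i$ plays no role: $\varphi$-mixing is preserved under arbitrary coordinatewise measurable maps.) Subtracting constants does not enlarge these $\sigma$-algebras, so $g_i:=f_i(X_i)-\E(f_i(X_i))$ is a centered $\varphi$-mixing sequence satisfying \eqref{phi-mixing}.

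It thus suffices to show that for a centered $\varphi$-mixing sequence $\{g_n\}$ with $\sum_{n\ge1}\varphi^{1/2}(2^n)<\infty$ there is a constant $C$, depending only on $(\varphi(2^n))_{n\ge1}$, such that
\[
  \E\Bigl(\sum_{i=k+1}^{k+\ell} g_i\Bigr)^2\le C\sum_{i=k+1}^{k+\ell}\E(g_i^2),\qquad k\ge0,\ \ell\ge1 .
\]
I would deduce this from the maximal $L^2$-inequality that holds for $\varphi$-mixing partial sums precisely under the dyadic summability condition \eqref{phi-mixing}, namely $\E\bigl(\max_{1\le j\le\ell}|\sum_{i=k+1}^{k+j}g_i|^2\bigr)\le C\sum_{i=k+1}^{k+\ell}\E(g_i^2)$, specialised to $j=\ell$ (only this weaker second-moment form is actually needed). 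Such an inequality is available either directly for $\varphi$-mixing sequences, or via Shao's Rosenthal-type maximal inequality for $\rho$-mixing sequences together with the classical comparison $\rho(n)\le 2\varphi^{1/2}(n)$, which, by \eqref{phi-mixing}, yields $\sum_n\rho(2^n)<\infty$. Once \eqref{eq:bound_var_00} is in hand, the conclusion of Corollary~\ref{prop.phimixing} is immediate from Theorems~\ref{thm.main0} and~\ref{thm.main1}.

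The one genuinely delicate point is this maximal (equivalently, second-moment) inequality. The naive route---expanding the variance, bounding $|\cov(g_i,g_j)|\le 2\varphi^{1/2}(|i-j|)\,\|g_i\|_2\|g_j\|_2$, and collecting the pairs $(i,j)$ according to the dyadic block containing the lag $|i-j|$---produces only the estimate $\E(\sum_{i=1}^{\ell} g_i)^2\le(1+4\sum_{t\ge0}2^t\varphi^{1/2}(2^t))\sum_{i=1}^{\ell}\E(g_i^2)$, which would require the much stronger hypothesis $\sum_t 2^t\varphi^{1/2}(2^t)<\infty$. Reducing the requirement to $\sum_t\varphi^{1/2}(2^t)<\infty$ is exactly what the dyadic-recursion argument behind Shao's inequality accomplishes: recursively halving the index interval while inserting growing gaps between the two halves replaces the offending weight $2^t$ by a bound of product type $\prod_t(1+c\,\varphi^{1/2}(2^t))\asymp\exp(c\sum_t\varphi^{1/2}(2^t))$, which is finite under \eqref{phi-mixing}. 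Apart from invoking (or reproducing) that inequality, everything is routine, so I expect this to be the main---and essentially the only---obstacle.
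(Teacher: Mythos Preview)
Your proposal is correct and follows exactly the paper's approach: verify that \eqref{eq:bound_var_00} holds for $\varphi$-mixing sequences under \eqref{phi-mixing}, then invoke Theorems~\ref{thm.main0} and~\ref{thm.main1}. The paper simply cites Corollary~2.3 of Utev~\cite{utev1991sums} for the variance bound rather than going through Shao's $\rho$-mixing inequality via $\rho(n)\le 2\varphi^{1/2}(n)$, but the structure of the argument is identical.
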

	\begin{proof}
		Corollary 2.3 of Utev \cite{utev1991sums} implies that \eqref{eq:bound_var_00} holds under \eqref{phi-mixing}. Corollary \ref{prop.extended}
		follows from Theorems  \ref{thm.main0} and  \ref{thm.main1}.
	\end{proof}
\begin{remark}		
	The condition \eqref{eq:bound_var_00} is very general.	The concepts of negative association, pairwise negative dependence, $\phi$-mixing can all be extended to
random vectors in Hilbert spaces (see, e.g., \cite{dedecker2008convergence,ko2009note,hien2015weak,hien2019negative}).
It is interesting to see if the results in this note can be extended to dependence random vectors taking values in Hilbert spaces.	
\end{remark}

\textbf{Acknowledgments} The research of the second-named author was supported by the Ministry of Education and Training, grant no. B2022-TDV-01.
	\bibliographystyle{tfs}
\bibliography{mybib}

\begin{thebibliography}{10}
\providecommand{\MR}{\relax\unskip\space MR }
\providecommand{\url}[1]{\normalfont{#1}}
\providecommand{\urlprefix}{Available at }

\bibitem{anh2021marcinkiewicz}
V.T.N. Anh, N.T.T. Hien, L.V. Thanh, and V.T.H. Van, \emph{The
  {Marcinkiewicz--Zygmund}-type strong law of large numbers with general
  normalizing sequences}, Journal of Theoretical Probability 34 (2021), pp.
  331--348.

\bibitem{baxter2004slln}
J. Baxter, R. Jones, M. Lin, and J. Olsen, \emph{{SLLN} for weighted
  independent identically distributed random variables}, Journal of Theoretical
  Probability 17 (2004), pp. 165--181.

\bibitem{bingham1989regular}
N.H. Bingham, C.M. Goldie, and J.L. Teugels, \emph{Regular variation}, Vol.~27,
  Cambridge University Press, 1989.

\bibitem{chen2014bahr}
P. Chen, P. Bai, and S.H. Sung, \emph{The {von Bahr--Esseen} moment inequality
  for pairwise independent random variables and applications}, Journal of
  Mathematical Analysis and Applications 419 (2014), pp. 1290--1302.

\bibitem{chen2010strong}
Y. Chen, A. Chen, and K.W. Ng, \emph{The strong law of large numbers for
  extended negatively dependent random variables}, Journal of Applied
  Probability 47 (2010), pp. 908--922.

\bibitem{da2020rates}
J.L. da  Silva, \emph{On the rates of convergence for sums of dependent random
  variables}, arXiv preprint arXiv:2011.10262  (2020).

\bibitem{dedecker2008convergence}
J. Dedecker and F. Merlev{\`e}de, \emph{Convergence rates in the law of large
  numbers for {Banach-valued} dependent variables}, Theory of Probability \&
  Its Applications 52 (2008), pp. 416--438.

\bibitem{etemadi1981elementary}
N. Etemadi, \emph{An elementary proof of the strong law of large numbers},
  Zeitschrift f{\"u}r Wahrscheinlichkeitstheorie und verwandte Gebiete 55
  (1981), pp. 119--122.

\bibitem{hien2015weak}
N.T.T. Hien and L.V. Thanh, \emph{On the weak laws of large numbers for sums of
  negatively associated random vectors in {Hilbert} spaces}, Statistics and
  Probability Letters 107 (2015), pp. 236--245.

\bibitem{hien2019negative}
N.T.T. Hien, L.V. Thanh, and V.T.H. Van, \emph{On the negative dependence in
  {Hilbert} spaces with applications}, Applications of Mathematics 64 (2019),
  pp. 45--59.

\bibitem{ko2009note}
M.H. Ko, T.S. Kim, and K.H. Han, \emph{A note on the almost sure convergence
  for dependent random variables in a {Hilbert} space}, Journal of Theoretical
  Probability 22 (2009), pp. 506--513.

\bibitem{martikainen1995strong}
A. Mart\u{\i}kainen, \emph{On the strong law of large numbers for sums of
  pairwise independent random variables}, Statistics and Probability Letters 25
  (1995), pp. 21--26.

\bibitem{rio1995vitesses}
E. Rio, \emph{Vitesses de convergence dans la loi forte pour des suites
  d{\'e}pendantes {(Rates of convergence in the strong law for dependent
  sequences)}}, {Comptes Rendus de l'Acad{\'e}mie des Sciences}. {S{\'e}rie 1,
  Math{\'e}matique} 320 (1995), pp. 469--474.

\bibitem{rosalsky2021note}
A. Rosalsky and L.V. Th\`{a}nh, \emph{A note on the stochastic domination
  condition and uniform integrability with applications to the strong law of
  large numbers}, Statistics and Probability Letters, In press  (2021).

\bibitem{sung2014marcinkiewicz}
S.H. Sung, \emph{{Marcinkiewicz--Zygmund} type strong law of large numbers for
  pairwise iid random variables}, Journal of Theoretical Probability 27 (2014),
  pp. 96--106.

\bibitem{thanh2020theBaum}
L.V. Th\`{a}nh, \emph{On the {Baum--Katz} theorem for sequences of pairwise
  independent random variables with regularly varying normalizing constants},
  Comptes Rendus Math\'{e}matique 358 (2020), pp. 1231--1238.

\bibitem{utev1991sums}
S. Utev, \emph{Sums of random variables with {$\phi$-mixing}}, Siberian
  Advances in Mathematics 1 (1991), pp. 124--155.

\bibitem{wu2021strong}
Y. Wu and X. Wang, \emph{Strong laws for weighted sums of {$m$-extended}
  negatively dependent random variables and its applications}, Journal of
  Mathematical Analysis and Applications 494 (2021), p. 124566.

\bibitem{wu2015strong}
Y. Wu and A. Rosalsky, \emph{Strong convergence for {$m$-pairwise} negatively
  quadrant dependent random variables}, Glasnik Matemati{\v{c}}ki, Series III
  50 (2015), pp. 245--259.

\end{thebibliography}
	
\end{document}